\newtheorem{theorem}{Theorem}[section]
\newtheorem{proposition}[theorem]{Proposition}
\newtheorem{lemma}[theorem]{Lemma}
\newcommand{\M}{\mathbb{M}}
\newcommand{\R}{\mathbb{R}}
\newcommand{\ang}{\angle}
\newcommand{\angkap}{\angle^\kappa}
\renewcommand{\b}{\bar}
\newcommand{\cs}{\operatorname{cs}}
\newcommand{\diam}{\operatorname{diam}}
\newcommand{\es}{\emptyset}
\newcommand{\gam}{\gamma}
\newcommand{\kap}{\kappa}
\newcommand{\md}{\operatorname{md}}
\newcommand{\om}{\omega}
\newcommand{\ol}{\overline}
\newcommand{\olH}{{\,\ol{\!H}}}
\newcommand{\per}{\operatorname{per}}
\newcommand{\sm}{\setminus}
\newcommand{\sn}{\operatorname{sn}}
\newcommand{\sub}{\subset}
\newcommand{\til}{\tilde}
\begin{document}

\title[Toponogov's theorem for Alexandrov spaces]%
{On Toponogov's comparison theorem for Alexandrov spaces}

\author{Urs Lang}
\address{Departement Mathematik, ETH Z\"urich,
R\"amistrasse 101, 8092 Z\"urich, Switzerland}
%\curraddr{}
\email{lang@math.ethz.ch}
%\thanks{}

\author{Viktor Schroeder}
\address{Institut f\"ur Mathematik, Universit\"at Z\"urich,
Winterthurerstrasse 190, 8057 Z\"urich, Switzerland}
%\curraddr{}
\email{viktor.schroeder@math.uzh.ch}
%\thanks{}

\date{25 July 2012}

\maketitle

%\begin{abstract}
%\end{abstract}

\section*{Introduction}

In this expository note, we present a transparent proof of Toponogov's theorem 
for Alexandrov spaces in the general case, not assuming local compactness 
of the underlying metric space. More precisely, we show that if $M$ 
is a complete geodesic metric space such that the Alexandrov triangle 
comparisons for curvature greater than or equal to $\kap \in \R$ 
are satisfied locally, then these comparisons also hold in the large; 
see Theorem~\ref{Thm:top}. The core of the proof is Proposition~\ref{Prop:45}. 
It states that a hinge $H = px \cup py$ in $M$ has the desired comparison 
property if every hinge $H' = p'x' \cup py'$ with an endpoint on $H$ and 
perimeter $|p'x'| + |p'y'| + |x'y'|$ less than some fixed fraction of the 
perimeter of $H$ has this property. The argument involves simple 
inductive constructions in $M$ and the model space $\M^2_\kap$ of constant 
curvature, leading to two monotonic quantities 
(see~\eqref{eq:l} and~\eqref{eq:xy}), whose limits agree.
This immediately gives the required inequality. 

The history of Toponogov's theorem starts with the work of 
Alexandrov~\cite{Alex1}, who proved it for convex surfaces.
Toponogov~\cite{Top1,Top2,Top3} established the result for 
Riemannian manifolds, in which case the local comparison inequalities
are equi\-valently expressed as a respective lower bound on 
the sectional curvature. 
A first purely metric local-to-global argument was given in~\cite{Pla1} for 
geodesic metric spaces with extendable geodesics.
In its most general form, without the assumption of local compactness,
the theorem was proved in~\cite{BurGP} (in~\cite{BurBI} the result is 
attributed to Perelman). An independent approach, building 
on~\cite{Pla1}, was then described by Plaut~\cite{Pla2}. The present note
further simplifies his argument.

In fact, the statements in both~\cite{BurGP} and~\cite{Pla2} differ from 
what is shown here in that the metric of $M$ is merely assumed
to be intrinsic (that is, $d(p,q)$ equals the infimum of the lengths
of all curves connecting $p$ and $q$, but it is not required that the 
infimum is attained); correspondingly, the Alexandrov comparisons are 
formulated without reference to shortest curves in~$M$.
However, assuming $M$ to be geodesic is not a severe restriction.
By~\cite[Theorem~1.4]{Pla2}, for every point $p$ in a complete, 
intrinsic metric space $M$ of curvature locally bounded below there is a 
dense G$_\delta$ subset $J_p$ of $M$ such that for all $q \in J_p$ there 
exists a shortest curve from $p$ to $q$. A proof of Toponogov's 
theorem for intrinsic spaces via essentially the same construction as here, 
which was obtained independently by Petrunin, is contained in the preliminary 
version of the forthcoming book~\cite{AleKP}.  
Nevertheless, we felt that it would be worthwhile to make the argument
in the present sleek form for geodesic spaces (such as complete Riemannian 
manifolds) available in the literature.

\section{Preliminaries}

In this section we fix the notation and recall some basic definitions 
and facts from metric geometry.

Let $M$ be a metric space with metric $d$. 
By a {\em segment\/} connecting two points $p,q$ in $M$ we mean the image 
of an isometric embedding $[0,d(p,q)] \to M$ that maps $0$ to $p$ and
$d(p,q)$ to $q$. We will write $pq$ for some such segment (assuming there 
is one), despite the fact that it need not be uniquely determined by $p$ 
and $q$.
We will use the symbol $|pq|$ as a shorthand for $d(p,q)$, regardless 
of the existence of a segment $pq$.
By a {\em hinge\/} $H = H_p(x,y)$ 
in $M$ we mean a collection of three points $p,x,y$ and two nondegenerate
segments $px,py$ in $M$; thus $p \not\in \{x,y\}$ (but possibly $x = y$). 
We call $p$ the {\em vertex}, $x,y$ the {\em endpoints}, and $px,py$ the 
{\em sides\/} of $H$.
The {\em perimeter\/} of a triple $(p,x,y)$ of points in $M$ is the number
\[
\per(p,x,y) := |px| + |py| + |xy|.
\] 
By the perimeter $\per(H)$ of a hinge $H = H_p(x,y)$ we mean 
the perimeter of the triple $(p,x,y)$.

We denote by $\M^m_\kap$ the $m$-dimensional, complete and simply connected
model space of constant sectional curvature $\kap \in \R$.
We write 
\[
D_\kap := \diam(\M^m_\kap) 
= \begin{cases} 
\pi/\sqrt{\kap} & \text{if $\kap > 0$,} \\
\infty & \text{if $\kap \le 0$}
\end{cases}
\]
for the diameter of $\M^m_\kap$.
Some trigonometric formulae for the model spaces are
collected in the appendix. The following basic monotonicity property
follows readily from the law of cosines, 
equation~\eqref{eq:snc}.

\begin{lemma} \label{Lem:c-ab}
Let $\kap \in \R$, and let $a,b \in (0,D_\kap)$ be fixed. 
For $\gam \in [0,\pi]$, let $H_p(x,y)$ be a hinge in $\M^2_\kap$ with 
$|px| = b$ and $|py| = a$ such that the hinge angle $\ang_p(x,y)$ 
(between $px$ and $py$) equals $\gam$, and put $c_{a,b}(\gam) := |xy|$. 
The function $c_{a,b}$ so defined is continuous and strictly increasing 
on~$[0,\pi]$.
\end{lemma}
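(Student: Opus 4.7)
The plan is to read off both assertions directly from the law of cosines, equation~\eqref{eq:snc}, which in each of the three sign regimes of $\kap$ expresses a strictly monotone transform of $c := |xy|$ as an affine function of $\cos\gam$. For $\kap = 0$ this is the familiar $c^2 = a^2 + b^2 - 2ab\cos\gam$; for $\kap > 0$ one has
\[
\cos(\sqrt{\kap}\,c) = \cos(\sqrt{\kap}\,a)\cos(\sqrt{\kap}\,b) + \sin(\sqrt{\kap}\,a)\sin(\sqrt{\kap}\,b)\cos\gam,
\]
and for $\kap < 0$ the analogous hyperbolic identity involving $\cosh,\sinh$.

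I would then check that in each case the coefficient of $\cos\gam$ on the right is nonzero with a definite sign. The hypothesis $a,b \in (0,D_\kap)$ is precisely what is needed to ensure this: in the spherical case it forces $\sqrt{\kap}\,a,\sqrt{\kap}\,b \in (0,\pi)$ and hence $\sin(\sqrt{\kap}\,a)\sin(\sqrt{\kap}\,b) > 0$, while in the flat and hyperbolic cases $a,b > 0$ is already enough. Combined with the strict monotonicity of the corresponding transform of $c$ on its relevant range---namely $\cos(\sqrt{\kap}\,\cdot)$ decreasing on $[0,\pi/\sqrt{\kap}]$, $(\cdot)^2$ increasing on $[0,\infty)$, or $\cosh(\sqrt{-\kap}\,\cdot)$ increasing on $[0,\infty)$---this identifies $c$ as a continuous and strictly increasing function of $\gam \in [0,\pi]$ (the signs arrange themselves consistently so that $c$ is increasing, not decreasing, in each case).

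The only point requiring any thought is that in the spherical case the monotonicity of $\cos(\sqrt{\kap}\,\cdot)$ is used on the full range $[0,D_\kap] = [0,\pi/\sqrt{\kap}]$ of $c$; but this is automatic since $D_\kap$ is by definition the diameter of $\M^2_\kap$. There is otherwise no substantive obstacle, and the proof is essentially a one-line inversion of~\eqref{eq:snc}.
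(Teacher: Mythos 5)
Your proposal is correct and matches the paper's approach: the paper offers no separate proof, stating only that the lemma ``follows readily from the law of cosines, equation~\eqref{eq:snc}'', which is exactly the inversion you carry out. The only cosmetic difference is that the paper's unified form $\sn_\kap^2(c/2) = \sn_\kap^2\bigl(\tfrac{a-b}{2}\bigr) + \sn_\kap(a)\sn_\kap(b)\sin^2(\gam/2)$ handles all signs of $\kap$ at once (with $\sn_\kap(a)\sn_\kap(b) > 0$ for $a,b \in (0,D_\kap)$ and $\sn_\kap^2(\cdot/2)$ strictly increasing on $[0,D_\kap]$), whereas you unfold it into the three classical cases.
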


The next lemma goes back to Alexandrov~\cite{Alex1}, 
compare~\cite[Lemma~4.3.3]{BurBI}.

\begin{lemma} \label{Lem:alexandrov}
Suppose that $H_p(q,y)$ and $H_q(x,y)$ are two hinges in $\M^2_\kap$ 
with $|py|,|qy|,|pq|+|qx| < D_\kap$, and $H_{\ol p}(\ol x,\ol y)$ is a hinge 
in $\M^2_\kap$ such that $|\ol p\,\ol x| = |pq|+|qx|$, $|\ol p\,\ol y| = |py|$,
and $|\ol x\,\ol y| = |xy|$.
Then $\ang_q(p,y) + \ang_q(x,y) \le \pi$ if and only 
if $\ang_p(q,y) \ge \ang_{\ol p}(\ol x,\ol y)$, and 
$\ang_q(p,y) + \ang_q(x,y) \ge \pi$ if and only if 
$\ang_p(q,y) \le \ang_{\ol p}(\ol x,\ol y)$.
\end{lemma}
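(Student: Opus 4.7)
The plan is to reduce the statement to two applications of Lemma~\ref{Lem:c-ab} by unfolding the broken segment $pq \cup qx$ into a straight segment at $q$, which converts the quantity $\ang_q(p,y)+\ang_q(x,y)$ into a comparison between two distances with the correct monotonicity.

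Concretely, since $|pq|+|qx|<D_\kap$, I would extend the segment $pq$ beyond $q$ to a point $\til x \in \M^2_\kap$ with $|q\til x|=|qx|$, so that $q$ lies on the segment $p\til x$ of length $|pq|+|qx|$. Two identities then fall out immediately: the supplementary-angle relation $\ang_q(\til x,y)=\pi-\ang_q(p,y)$, and $\ang_p(\til x,y)=\ang_p(q,y)$ (since $\til x$ lies on the ray from $p$ through $q$).

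Next I would apply Lemma~\ref{Lem:c-ab} at $q$ with the common side lengths $a=|qy|$ and $b=|qx|=|q\til x|$ of the hinges $H_q(\til x,y)$ and $H_q(x,y)$, obtaining
\[
|\til x\,y| = c_{a,b}(\pi - \ang_q(p,y)), \qquad |xy| = c_{a,b}(\ang_q(x,y)).
\]
Strict monotonicity of $c_{a,b}$ then gives $|\til x\,y| \ge |xy|$ if and only if $\ang_q(p,y)+\ang_q(x,y) \le \pi$, with the analogous reversed equivalence.

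Finally, I would apply Lemma~\ref{Lem:c-ab} a second time, at $p$ and at $\ol p$, with $a=|py|$ and $b=|pq|+|qx|$: the hinges $H_p(\til x,y)$ and $H_{\ol p}(\ol x,\ol y)$ share these side lengths, so inverting the strictly increasing function $c_{a,b}$ shows that each of their opening angles is a strictly increasing function of the opposite side. This yields $\ang_p(\til x,y) \ge \ang_{\ol p}(\ol x,\ol y)$ if and only if $|\til x\,y| \ge |xy|$, and combining with the previous step (using $\ang_p(\til x,y)=\ang_p(q,y)$) gives both biconditionals. The only point needing mild attention is the sign flip introduced by $\pi-\ang_q(p,y)$: it is precisely this flip that matches a ``$\le\pi$'' hypothesis on the sum of angles at $q$ with a ``$\ge$'' comparison of opening angles at $p$.
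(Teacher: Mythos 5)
Your proposal is correct and coincides with the paper's own argument: the paper likewise prolongates $pq$ beyond $q$ to a point $x'$ (your $\til x$) with $|px'|=|pq|+|qx|$, and then applies Lemma~\ref{Lem:c-ab} twice --- once at $q$ with sides $|qy|,|qx|$ and once at $p$ and $\ol p$ with sides $|py|,|pq|+|qx|$ --- to carry the sign of $\pi-\ang_q(p,y)-\ang_q(x,y)$ through $|x'y|-|xy|$ to $\ang_p(q,y)-\ang_{\ol p}(\ol x,\ol y)$. Your handling of the supplementary-angle sign flip and the inversion of the strictly increasing function $c_{a,b}$ matches the paper's chain of identities exactly.
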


\begin{proof}
Prolongate $pq$ to a segment $px'$ of length $|px'| = |pq|+|qx|$;
see Figure~\ref{Fig:alexandrov}. Consider the following obvious 
identities: 
\begin{align}
\pi - \ang_q(p,y) - \ang_q(x,y) 
&= \ang_q(x',y) - \ang_q(x,y), \label{eq:al-1} \\ 
|x'y| - |xy| 
&= |x'y| - |\ol x\,\ol y|, \label{eq:al-2} \\
\ang_p(x',y) - \ang_{\ol p}(\ol x,\ol y) 
&= \ang_p(q,y) - \ang_{\ol p}(\ol x,\ol y). \label{eq:al-3}
\end{align}
By Lemma~\ref{Lem:c-ab}, the right side of~\eqref{eq:al-1} and 
the left side of~\eqref{eq:al-2} have the same sign, 
and also the right side of~\eqref{eq:al-2} and 
the left side of~\eqref{eq:al-3} have equal sign. Hence, the same holds for 
the left side of~\eqref{eq:al-1} and the right side of~\eqref{eq:al-3}.
\end{proof}
\begin{figure}
\begin{center}
\psset{unit=1mm,linewidth=0.8pt}
\begin{pspicture}(0,0)(90,33)
\psdots(5,4)(30,4)(34,16)(26,25.8)(38,28)
\psline(5,4)(30,4)(34,16)
\psline(5,4)(34,16)(26,26)
\psline[linestyle=dotted,linewidth=1.2pt](5,4)(26,26)
\psline[linestyle=dashed](34,16)(38,28)
\uput[-135](5,4){$y$}
\uput[-45](30,4){$p$}
\uput[0](34,16){$q$}
\uput[45](26,26){$x$}
\uput[45](38,28){$x'$}
\psarc[linewidth=0.3pt](30,4){4}{71}{180}
\psarc[linewidth=0.3pt](34,16){4}{129}{202}
\psarc[linewidth=0.3pt](34,16){5}{202}{251}
%%%%%
\psdots(60,4)(85,4)(78.02,28.32)
\psline(60,4)(85,4)(78.02,28.32)
\psline[linestyle=dotted,linewidth=1.2pt](60,4)(78.02,28.32)
\uput[-135](60,4){$\ol y$}
\uput[-45](85,4){$\ol p$}
\uput[45](78.02,28.32){$\ol x$}
\psarc[linewidth=0.3pt](85,4){4}{106}{180}
\end{pspicture}
\end{center}
\caption{Proof of Lemma~\ref{Lem:alexandrov}} \label{Fig:alexandrov}
\end{figure}

Let again $M$ be a metric space, and let $\kap \in \R$.
Given $p,x,y \in M$, a triple $(\ol p,\ol x,\ol y)$ 
of points in~$\M^2_\kap$ is called a {\em comparison triple\/} 
for $(p,x,y)$ if $|\ol p\,\ol x| = |px|$, $|\ol p\,\ol y| = |py|$, 
and $|\ol x\,\ol y| = |xy|$. 
If $\kap \le 0$, such a comparison triple always 
exists, and if $\kap > 0$, a comparison triple exists if and only if  
$\per(p,x,y) \le 2D_\kap$. 
This is obvious if one of the distances
$a := |py|$, $b := |px|$, and $c := |xy|$ is zero or equal to
$D_\kap$. Otherwise, when $a,b,c \in (0,D_\kap)$, the assertion follows 
from Lemma~\ref{Lem:c-ab}: Depending on whether $a + b < D_\kap$ or 
$a + b \ge D_\kap$, the function $c_{a,b}$ maps $[0,\pi]$ bijectively 
onto $[|a-b|,a+b]$ or $[|a-b|,2D_\kap-a-b]$.
In either case, the given number $c$ is contained in the image of $c_{a,b}$, 
so there exists a unique $\gam \in [0,\pi]$ such that $c_{a,b}(\gam) = c$.

Now consider a triple $(p,x,y)$ of points in $M$ such that $p \not\in \{x,y\}$.
In case $\kap > 0$, suppose that $|px|,|py| < D_\kap$ and  
$\per(p,x,y) \le 2D_\kap$. Then any comparison 
triple $(\ol p,\ol x,\ol y)$ in $\M^2_\kap$ uniquely determines a hinge 
$H_{\ol p}(\ol x,\ol y)$ and one defines  
the {\em comparison angle\/} $\angkap_p(x,y) \in [0,\pi]$ as the
hinge angle, thus
\[
\angkap_p(x,y) := \ang_{\ol p}(\ol x,\ol y).
\]
For an arbitrary hinge $H_p(x,y)$ in $M$, the
(Alexandrov) {\em angle\/} or {\em upper angle\/} of $H_p(x,y)$ is 
then defined by
\[
\angle_p(x,y) := \limsup_{\substack{u \in px,\,v \in py\\u,v \to p}}
\angkap_p(u,v).
\]
The number $\angle_p(x,y)$ is clearly independent of $\kap \in \R$.
Furthermore, if $px,py,pz$ are three nondegenerate segments, 
the triangle inequality
\begin{equation} \label{eq:tri}
\angle_p(x,y) + \angle_p(y,z) \ge \angle_p(x,z)
\end{equation} 
holds, see~\cite{Alex0} or~\cite[Part I, Proposition~1.14]{BriH}.

Let again $H = H_p(x,y)$ be a hinge in $M$, and suppose that 
$\per(H) < 2D_\kap$. 
Let $(\ol p,\ol x,\ol y)$ be a comparison triple in $\M^2_\kap$ for $(p,x,y)$,
and let $H_{\hat p}(\hat x,\hat y)$ be a {\em comparison hinge\/} in~$\M^2_\kap$ 
for $H$, that is, $|\hat p\hat x| = |px|$, $|\hat p\hat y| = |py|$, and 
$\ang_{\hat p}(\hat x,\hat y) = \ang_p(x,y)$.
We are interested in the following comparison properties that $H$ may or 
may not have:
\begin{itemize}
\item[(A$_\kap$)] 
(Angle comparison) $\ang_p(x,y) \ge \angkap_p(x,y)$ 
($= \ang_{\ol p}(\ol x,\ol y)$); 
\item[(H$_\kap$)]
(Hinge comparison) $|xy| \le |\hat x\hat y|$;
\item[(D$_\kap$)]
(Distance comparison) $|uv| \ge |\ol u\,\ol v|$ whenever $u \in px$, 
$v \in py$, $\ol u \in \ol p\,\ol x$, $\ol v \in \ol p\,\ol y$, and 
$|pu| = |\ol p\,\ol u|$, $|pv| = |\ol p\,\ol v|$. 
\end{itemize}
It follows easily from Lemma~\ref{Lem:c-ab} that, for an individual hinge 
$H$ as above,
\[
\text{(D$_\kap$) $\Rightarrow$ (A$_\kap$) $\Leftrightarrow$ (H$_\kap$).}
\]
For the implication (A$_\kap$) $\Rightarrow$ (D$_\kap$), 
see Lemma~\ref{Lem:a-d} below. The metric space $M$ is called 
a {\em space of curvature~$\ge \kap$ in the sense of Alexandrov\/}
if every point $q$ has a neighborhood $U_q$ such that any two points 
in $U_q$ are connected by a segment in $M$ and every hinge $H = H_p(x,y)$ 
with $p,x,y \in U_q$ (and $\per(H) < 2D_\kap$) satisfies (D$_\kap$). 
Again due to Lemma~\ref{Lem:c-ab}, the upper angle
between two segments in such a space $M$ always exists as a limit, 
by monotonicity. We call a segment $px$ in a metric space {\em balanced\/} if,
for every nondegenerate segment $qy$ with $q \in px \sm \{p,x\}$,
the angles formed by $qy$ and the subsegments $qp,qx$ of $px$ 
satisfy $\ang_q(p,y) + \ang_q(x,y) = \pi$. Note that, by~\eqref{eq:tri},
the inequality $\ang_q(p,y) + \ang_q(x,y) \ge \pi$ always holds, 
since $\ang_q(p,x) = \pi$. Of course, in a Riemannian manifold every segment 
is balanced.

\begin{lemma} \label{Lem:a-d}
Let $\kap \in \R$, and let $M$ be a metric space. Then:
\begin{itemize}
\item[\rm (i)] 
If $M$ is a space of curvature $\ge \kap$ in the sense of 
Alexandrov, then all segments in $M$ are balanced.
\item[\rm (ii)]
Let $H = H_p(x,y)$ be a hinge in $M$ with balanced sides and 
$\per(H) < 2D_\kap$. Suppose that every pair of points in $px \cup py$ is 
connected by a segment in~$M$ and every hinge with one side contained 
in $px$ or $py$ and the opposite endpoint on the other side of~$H$
satisfies~{\rm (A$_\kap$)}. Then $H$ satisfies~{\rm (D$_\kap$)}.
\end{itemize}
\end{lemma}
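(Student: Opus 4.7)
My approach is to apply Lemma~\ref{Lem:alexandrov} to comparison hinges in $\M^2_\kap$ twice: in part~(i) at the interior vertex $q$, and in part~(ii) at the endpoint $p$, each time using collinearity along a balanced segment (either $px$ or $py$) to straighten two sub-hinges into one.

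For part~(i): the inequality $\ang_q(p,y) + \ang_q(x,y) \ge \pi$ follows from \eqref{eq:tri} together with $\ang_q(p,x) = \pi$, so only the reverse direction needs work. I would choose a neighborhood $U_q$ on which (D$_\kap$) holds and take points $u \in qp$, $w \in qx$, $v \in qy$ close to $q$ and lying in $U_q$. Collinearity of $u,q,w$ on $px$ gives $|uw| = |uq| + |qw|$, so local (D$_\kap$) applied to the hinge $H_u(w,v)$ compares the point $q$ on its side $uw$ with the corresponding point $\ol q$ at distance $|uq|$ from $\ol u$ in the comparison hinge in $\M^2_\kap$; this yields $|qv| \ge |\ol q\,\ol v|$, and by Lemma~\ref{Lem:c-ab} also $\angkap_u(q,v) \ge \angkap_u(w,v)$. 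I then invoke Lemma~\ref{Lem:alexandrov} in $\M^2_\kap$ for the comparison hinges of the triples $(u,q,v)$ and $(q,w,v)$, glued along $\ol q\,\ol v$ with straightened hinge equal to the comparison hinge of $(u,w,v)$: this converts the previous inequality into $\angkap_q(u,v) + \angkap_q(w,v) \le \pi$. Letting $u,w,v \to q$ and using that the upper angle in a space of curvature~$\ge \kap$ is the limit of comparison angles, I obtain $\angkap_q(u,v) \to \ang_q(p,y)$ and $\angkap_q(w,v) \to \ang_q(x,y)$, and hence $\ang_q(p,y) + \ang_q(x,y) \le \pi$.

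For part~(ii): by Lemma~\ref{Lem:c-ab}, property (D$_\kap$) for $H$ amounts to the inequality $\angkap_p(u,v) \ge \angkap_p(x,y)$ for all $u \in px$ and $v \in py$, which I would establish by two symmetric monotonicity steps. First, fix $u$ interior to $px$ and arbitrary $v \in py \sm \{p\}$; balancedness of $px$ gives $\ang_u(p,v) + \ang_u(x,v) = \pi$, and by hypothesis the hinges $H_u(p,v)$ and $H_u(x,v)$ (each with one side in $px$ and opposite endpoint on $py$) both satisfy (A$_\kap$), so summing produces $\angkap_u(p,v) + \angkap_u(x,v) \le \pi$. Applied in $\M^2_\kap$ to the comparison hinges of $(p,u,v)$ and $(u,x,v)$, glued along $uv$ and straightened via $|pu|+|ux|=|px|$ to the comparison hinge of $(p,x,v)$, Lemma~\ref{Lem:alexandrov} turns this into $\angkap_p(u,v) \ge \angkap_p(x,v)$. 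The symmetric step, with the roles of $px$ and $py$ swapped and the balanced identity applied at an interior $v \in py$ together with (A$_\kap$) for $H_v(p,x)$ and $H_v(y,x)$, gives $\angkap_p(x,v) \ge \angkap_p(x,y)$. Chaining the two yields the inequality for all interior pairs; the boundary cases $u = x$ or $v = y$ are covered by invoking only the unused step.

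The main obstacle, in my view, is spotting in part~(ii) that the balanced identity at an interior vertex $u$ of $px$, combined with (A$_\kap$) for the two adjacent sub-hinges at $u$, is exactly the input needed for Lemma~\ref{Lem:alexandrov} to flip into a monotonicity statement for $\angkap_p(\cdot\,,v)$ back at the fixed endpoint $p$; once this identification is seen, the reduction to the endpoint case is essentially forced. A routine bookkeeping point is to verify that $\per(H) < 2D_\kap$, together with triangle-inequality estimates such as $|uv| \le |ux| + |xy| + |yv|$, keeps all auxiliary triples within the range where comparison hinges and Lemma~\ref{Lem:alexandrov} are available.
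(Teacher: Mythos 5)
Your proposal is correct and follows essentially the same route as the paper: in (i) the same local (D$_\kap$) comparison at a hinge spanning the interior point $q$, converted via Lemma~\ref{Lem:c-ab} and Lemma~\ref{Lem:alexandrov} into $\angkap_q(u,v)+\angkap_q(w,v)\le\pi$ and then passed to the (monotone) limit, and in (ii) the same two-step reduction combining balancedness with (A$_\kap$) for the sub-hinges and Lemma~\ref{Lem:alexandrov}, differing from the paper only in cosmetic respects (the order of the two monotonicity steps, phrasing via comparison angles $\angkap_p(u,v)\ge\angkap_p(x,v)\ge\angkap_p(x,y)$ rather than model distances, and saying ``comparison hinge'' where ``comparison triple'' is meant). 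The degenerate cases you leave implicit are exactly those the paper also dismisses as trivial.
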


\begin{proof}
For~(i), let $px,qy$ be two nondegenerate segments in $M$ such that 
$q \in px \sm \{p,x\}$. Let $u \in qp$, $v \in qx$, $w \in qy$ be points 
distinct from $q$, and assume that $u \ne w$. If $u,v,w$ are sufficiently close
to $q$, then there is a segment $uw$ such that the hinge $H_u(v,w)$ with 
$uv \sub px$ satisfies (D$_\kap$). Let $(\ol u,\ol v,\ol w)$ be a comparison 
triple in $\M^2_\kap$ for $(u,v,w)$, and let $\ol q \in \ol u\,\ol v$ be the 
point with $|\ol q\,\ol u| = |qu|$. Then $|qw| \ge |\ol q\,\ol w|$ and so 
$\angkap_u(q,w) \ge \ang_{\ol u}(\ol q,\ol w) = \ang_{\ol u}(\ol v,\ol w)$ by 
Lemma~\ref{Lem:c-ab}. Now Lemma~\ref{Lem:alexandrov} shows that
$\angkap_q(u,w) + \angkap_q(v,w) \le \pi$. 
Passing to the limit for $u,v,w \to q$
we get $\ang_q(p,y) + \ang_q(x,y) \le \pi$. 

We prove (ii). Let $(\ol p,\ol x,\ol y)$ be a comparison triple in $\M^2_\kap$ 
for $(p,x,y)$, and let $u,v$ and $\ol u,\ol v$ be given as in (D$_\kap$).
We first show that $|uy| \ge |\ol u\,\ol y|$. Omitting some trivial cases, we
assume $u \not\in \{p,x,y\}$. Choose a segment $uy$. Then 
$\angkap_u(p,y) + \angkap_u(x,y) \le \ang_u(p,y) + \ang_u(x,y) = \pi$
by the assumptions and so Lemma~\ref{Lem:alexandrov} yields 
$\angkap_p(u,y) \ge \ang_{\ol p}(\ol x,\ol y) = \ang_{\ol p}(\ol u,\ol y)$.
By Lemma~\ref{Lem:c-ab}, $|uy| \ge |\ol u\,\ol y|$. An analogous 
argument shows that $|uv| \ge |\til u\,\til v|$ if $(\til p,\til u,\til y)$ 
is a comparison triple for $(p,u,y)$ and $\til v \in \til p\,\til y$ is such
that $|pv| = |\til p\,\til v|$. Since $|\til u\,\til y| = |uy| 
\ge |\ol u\,\ol y|$, we have 
$\ang_{\til p}(\til u,\til v) = \ang_{\til p}(\til u,\til y) \ge 
\ang_{\ol p}(\ol u,\ol y) = \ang_{\ol p}(\ol u,\ol v)$ 
(assuming $p \not\in \{u,v\}$) 
and hence $|\til u\,\til v| \ge |\ol u\,\ol v|$ by Lemma~\ref{Lem:c-ab}. 
So $|uv| \ge |\ol u\,\ol v|$. 
\end{proof}

%%%%%%%%%%%%%%%%%%%%%%%%%%%%%%%%%%%%%%%%%%%%%%%%%%%%%%%%%%%%%%%%%%%%%%%%%%%%%%%

\section{The globalization theorem}

Now we prove Toponogov's theorem, in the form stated in 
Theorem~\ref{Thm:top} below. The central piece of the argument is
Proposition~\ref{Prop:45}, the following lemma and the concluding 
part of the proof are standard techniques.

\begin{lemma} \label{Lem:subdiv}
Let $\kap \in \R$, let $M$ be a metric space, and let $H = H_p(x,y)$ be 
a hinge in $M$ with $\per(H) < 2D_\kap$.
Suppose that there exist a point $q$ on $px$, distinct from $p,x,y$, 
and a segment $qy$ such that each of the three hinges 
$H_p(q,y),H_q(p,y),H_q(x,y)$ with sides in $px \cup py \cup qy$ 
satisfies {\rm (A$_\kap$)}, and $\ang_q(p,y) + \ang_q(x,y) = \pi$.
Then $H$ satisfies {\rm (A$_\kap$)} as well.
\end{lemma}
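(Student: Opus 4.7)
The plan is to transfer the configuration into $\M^2_\kap$ and apply Lemma~\ref{Lem:alexandrov} once. The key preliminary observation is that since $q$ lies on $px$ strictly between $p$ and $x$, the upper angles satisfy $\ang_p(q,y) = \ang_p(x,y)$, so to prove $\ang_p(x,y) \ge \angkap_p(x,y)$ it suffices to establish $\angkap_p(q,y) \ge \angkap_p(x,y)$, because the hypothesis (A$_\kap$) for $H_p(q,y)$ then bridges $\ang_p(q,y)$ and $\angkap_p(q,y)$.

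My first step is to choose comparison triples in $\M^2_\kap$ for $(p,q,y)$ and for $(q,x,y)$. Each of these has perimeter bounded by $\per(H) < 2D_\kap$ (via the triangle inequality in $M$ applied to $|qy|$), so the triples exist. After applying an isometry of $\M^2_\kap$, I glue the two triples along the common side of length $|qy|$, placing the copies of $p$ and $x$ on opposite sides of it; call the resulting four points $\hat p,\hat q,\hat x,\hat y$.

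By construction $\ang_{\hat q}(\hat p,\hat y) = \angkap_q(p,y)$ and $\ang_{\hat q}(\hat x,\hat y) = \angkap_q(x,y)$. Applying (A$_\kap$) to the two hinges at $q$ together with the given balance condition yields
\[
\ang_{\hat q}(\hat p,\hat y) + \ang_{\hat q}(\hat x,\hat y)
\le \ang_q(p,y) + \ang_q(x,y) = \pi.
\]
This is exactly the input for the ``$\le \pi$'' branch of Lemma~\ref{Lem:alexandrov} applied to the two hinges $H_{\hat p}(\hat q,\hat y)$ and $H_{\hat q}(\hat x,\hat y)$ in $\M^2_\kap$. The auxiliary reference hinge in that lemma has side lengths $|\hat p\,\hat q|+|\hat q\,\hat x| = |px|$, $|\hat p\,\hat y| = |py|$, and $|\hat x\,\hat y| = |xy|$, so it is precisely a comparison hinge for $(p,x,y)$ and its vertex angle equals $\angkap_p(x,y)$. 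Lemma~\ref{Lem:alexandrov} then delivers $\ang_{\hat p}(\hat q,\hat y) \ge \angkap_p(x,y)$, i.e.\ $\angkap_p(q,y) \ge \angkap_p(x,y)$, and chaining with (A$_\kap$) for $H_p(q,y)$ and the identity $\ang_p(q,y) = \ang_p(x,y)$ finishes the proof.

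I do not anticipate a genuine obstacle: the only bookkeeping is the perimeter bound for the two sub-triples (automatic from $\per(H) < 2D_\kap$ and the triangle inequality) and the orientation of the gluing. It is worth noting that the equality $\ang_q(p,y) + \ang_q(x,y) = \pi$ in the hypothesis enters only through the inequality ``$\le \pi$'' at $\hat q$; the reverse inequality ``$\ge \pi$'' is automatic from~\eqref{eq:tri} applied at $q \in px$, so the hypothesis is in fact equivalent to $\ang_q(p,y) + \ang_q(x,y) \le \pi$.
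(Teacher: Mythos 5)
Your proposal is correct and takes essentially the same route as the paper: both reduce the claim via $\ang_p(x,y)=\ang_p(q,y)$ to showing $\angkap_p(q,y)\ge\angkap_p(x,y)$, and obtain this from Lemma~\ref{Lem:alexandrov} applied to the comparison configuration for $(p,q,y)$ and $(q,x,y)$, using $\angkap_q(p,y)+\angkap_q(x,y)\le\ang_q(p,y)+\ang_q(x,y)=\pi$; your explicit gluing of the two comparison triples along the side of length $|qy|$ merely spells out what the paper leaves implicit. Your closing remark that only the inequality $\le\pi$ is actually used (the reverse being automatic from~\eqref{eq:tri}) is likewise consistent with the paper's observation preceding Lemma~\ref{Lem:a-d}.
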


\begin{proof}
Note that $\per(p,q,y),\per(q,x,y) \le \per(H) < 2D_\kap$.
Since $H_p(q,y)$ satisfies (A$_\kap$), we have
$\ang_p(x,y) = \ang_p(q,y) \ge \angkap_p(q,y)$.
By the remaining assumptions,
$\angkap_q(p,y) + \angkap_q(x,y) \le \ang_q(p,y) + \ang_q(x,y) = \pi$
and so Lemma~\ref{Lem:alexandrov} gives 
$\angkap_p(q,y) \ge \angkap_p(x,y)$.
Thus $\ang_p(x,y) \ge \angkap_p(x,y)$.
\end{proof}

\begin{proposition} \label{Prop:45}
Let $\kap \in \R$, and let $M$ be a metric space such that every pair of 
points in $M$ at distance $< D_\kap$ is connected by a balanced segment.
Let $H_p(x,y)$ be a hinge in $M$ with balanced sides and
$\per(p,x,y) < 2D_\kap$. If every hinge $H_{p'}(x',y')$ in $M$ with 
balanced sides, $\per(p',x',y') < \frac45 \per(p,x,y)$, and 
$\{x',y'\} \cap (px \cup py) \ne \es$ satisfies {\rm (A$_\kap$)}, 
then $H_p(x,y)$ satisfies {\rm (A$_\kap$)} as well.
\end{proposition}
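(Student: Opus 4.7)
Setting $P := \per(p, x, y)$ and $\gam_0 := \ang_p(x, y)$, the goal is $|xy| \le c_{|px|,|py|}(\gam_0)$, equivalently $\angkap_p(x, y) \le \gam_0$. The first observation is that the hypothesis already supplies this inequality for an entire ``safe family'' of subhinges based at $p$: for any $q \in px$ and $q' \in py$ with $|pq| + |pq'| < \tfrac{2}{5}P$, the triangle inequality gives $\per(H_p(q, q')) \le 2(|pq|+|pq'|) < \tfrac{4}{5}P$. The sides $pq$ and $pq'$ are balanced as subsegments of $px$ and $py$, and $\{q, q'\} \sub px \cup py$, so the hypothesis provides (A$_\kap$) for $H_p(q, q')$. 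Since the upper angle at $p$ depends only on the germs of the sides, $\ang_p(q, q') = \gam_0$, which gives $|qq'| \le c_{|pq|, |pq'|}(\gam_0)$ throughout the safe region.

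The plan is then to enlarge this region iteratively, producing the two monotonic sequences promised in the introduction. Concretely I would build a sequence of pairs $(q_n, q'_n) \in px \times py$ with $|pq_n|+|pq'_n|$ strictly increasing toward $|px|+|py|$ and $|q_n q'_n| \le c_{|pq_n|,|pq'_n|}(\gam_0)$ at every stage. In parallel, in $\M^2_\kap$ I would glue the comparison triangles of the successive auxiliary hinges into an ``unfolded'' polygonal chain; its total length $\ell_n$ and the chord distance $d_n$ between its endpoints are the two monotonic quantities, with $\ell_n$ non-increasing and $d_n$ non-decreasing by repeated use of Lemma~\ref{Lem:alexandrov}, and $d_n \le \ell_n$ at every step. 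For the inductive step I would pick a bridge point $q_{n+1} \in q_n x$ just beyond $q_n$, chosen close enough that the bridge hinge $H_{q_n}(q_{n+1}, q'_n)$ has perimeter below $\tfrac{4}{5}P$, whence (A$_\kap$) by hypothesis; balancedness of $px$ gives $\ang_{q_n}(p, q'_n) + \ang_{q_n}(q_{n+1}, q'_n) = \pi$, and Lemma~\ref{Lem:alexandrov} splices the new comparison triangle onto the chain while transferring the bound from $(q_n, q'_n)$ to $(q_{n+1}, q'_n)$. A symmetric step on the $py$-side then moves $q'_n \to q'_{n+1}$.

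In the limit $|pq_n| + |pq'_n| \to |px|+|py|$, the unfolded chain completes to a configuration whose terminal edge is the comparison chord for $H_p(x,y)$; the coincidence $\lim \ell_n = \lim d_n$ then forces $|xy| \le c_{|px|,|py|}(\gam_0)$ via Lemma~\ref{Lem:c-ab}. The main obstacle I anticipate is the quantitative bookkeeping: one must ensure that every bridge hinge's perimeter stays strictly below $\tfrac{4}{5}P$ throughout the iteration and that the sums $|pq_n|+|pq'_n|$ actually reach $|px|+|py|$ in the limit rather than stalling, while also verifying that the $\M^2_\kap$-side chain is consistent at each splicing. The constant $\tfrac{4}{5}$ appears to be the sharp threshold making the safe range at $p$ (up to $\tfrac{2}{5}P$) compatible with bridge hinges at intermediate vertices for the whole iteration.
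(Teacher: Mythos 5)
Your ``safe family'' observation is correct, and your instinct about two monotone quantities controlled by Lemma~\ref{Lem:alexandrov} and balancedness matches the spirit of the paper's argument; but the iteration you propose has a genuine gap, and it is exactly the one you flag as ``bookkeeping'' --- it is not bookkeeping, it is fatal to this route. Your bridge hinge $H_{q_n}(q_{n+1},q'_n)$ must satisfy (A$_\kap$) by the hypothesis, hence must have perimeter $< \tfrac45 P$. But its perimeter is at least $2|q_nq'_n| - |q_nq_{n+1}|$, so choosing the bridge step short does not help: as $(q_n,q'_n) \to (x,y)$ the perimeter tends to at least $2|xy|$, and $2|xy| < \tfrac45 P$ is equivalent to $|xy| < \tfrac23(|px|+|py|)$. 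For a thin hinge with $|xy|$ close to $|px|+|py|$ this fails, so the hypothesis becomes unavailable and the advancing pairs stall strictly before reaching $(x,y)$. Worse, the splice via Lemma~\ref{Lem:alexandrov} also needs the comparison-angle sum $\angkap_{q_n}(p,q'_n) + \angkap_{q_n}(q_{n+1},q'_n) \le \pi$, i.e.\ (A$_\kap$) for the ``backward'' hinge $H_{q_n}(p,q'_n)$ as well, whose perimeter can approach $2(|px|+|py|) \ge P$; so the scheme implicitly consumes comparison statements for large hinges of the very kind it is trying to produce.

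The paper escapes this with two moves absent from your proposal. First, a reduction (by repeated application of Lemma~\ref{Lem:subdiv}, using balancedness) to the special case of a hinge $H_0 = H_{p_0}(x_0,y_0)$ with one very short side, $b = |p_0x_0| < \min\bigl\{\tfrac15 a,\, D_\kap - a\bigr\}$ where $a = |p_0y_0|$. Second --- the key idea --- the iteration moves the \emph{vertex}, not the endpoints: $p_n$ is taken on the long side $p_{n-1}y_{n-1}$ at the fixed distance $b' = \tfrac25 a$ from $y_{n-1}$, and the endpoints merely swap roles, so $\{x_n,y_n\} = \{x_0,y_0\}$ for all $n$. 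Every auxiliary triangle is then $(p_{n-1},p_n,y_n)$ with $|p_ny_n| \le l_{n-1} - b' < \tfrac45 a$, hence perimeter $< \tfrac85 a \le \tfrac45 \per(H_0)$ \emph{uniformly in $n$}, so the hypothesis applies at every step with no stalling. The two monotone quantities are $l_n = |p_nx_n| + |p_ny_n| \ge |x_0y_0|$ and the model-space distance $|\ol x_n\ol y_n|$, both non-increasing (the latter by your intended use of Lemma~\ref{Lem:alexandrov} together with balancedness at $p_n$); since $l_{n-1} - l_n \to 0$ forces $\b\om_n, \b\gam_n \to \pi$ and therefore $l_n - |\ol x_n\ol y_n| \to 0$, one concludes $|\ol x_0\ol y_0| \ge |x_0y_0|$, which is (H$_\kap$) and hence (A$_\kap$) for $H_0$. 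To repair your write-up, replace the endpoint-advancing chain by this reduction-plus-vertex-migration scheme; your safe-family remark then becomes unnecessary.
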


\begin{proof}
We prove the following assertion, from which the general result follows 
easily by a repeated application of Lemma~\ref{Lem:subdiv}:
{\it Let $H_0 = H_{p_0}(x_0,y_0)$ be a hinge in $M$ with balanced sides
and $|p_0x_0| < \min\bigl\{\tfrac15 |p_0y_0|, D_\kap - |p_0y_0|\bigr\}$.
If every hinge $H_{p'}(x',y')$ in $M$ with balanced sides,
$\per(p',x',y') < \frac45 \per(H_0)$, and 
$\{x',y'\} \cap \{x_0,y_0\} \ne \es$ satisfies {\rm (A$_\kap$)}, 
then $H_0$ satisfies {\rm (A$_\kap$)} as well.}
We put $a := |p_0y_0|$ and $b := |p_0x_0|$, so $b < \tfrac15 a$ and
$a + b < D_\kap$.

First, starting from $H_0$, we will inductively construct a
particular sequence of hinges $H_n = H_{p_n}(x_n,y_n)$ in $M$ with balanced 
sides such that $\{x_n,y_n\} = \{x_0,y_0\}$
and the numbers $l_n := |p_nx_n| + |p_ny_n|$ satisfy
\begin{equation} \label{eq:l}
a + b = l_0 \ge l_1 \ge l_2 \ge \ldots \ge |x_0y_0|;
\end{equation}
furthermore, for $n \ge 1$, $|p_nx_n| = b' := \frac25 a$ and hence
\[
|p_ny_n| \ge |x_ny_n| - |p_nx_n| = |x_0y_0| - b' \ge a - b - b' > b'.
\]
The hinge $H_0$ is already given.
For $n \ge 1$, if $H_{n-1}$ is constructed, let $p_n \in p_{n-1}y_{n-1}$ 
be the point at distance $b'$ from $y_{n-1}$, and put 
$x_n := y_{n-1}$ and $y_n := x_{n-1}$. Note that 
\[
|p_ny_n| \le |p_{n-1}p_n| + |p_{n-1}y_n| = l_{n-1} - b' 
\le a + b - b' < \tfrac45 a
\]
and hence $\per(p_{n-1},p_n,y_n) < \tfrac85 a \le \tfrac45 \per(H_0)$.
The sides of $H_n$ are the subsegment $p_nx_n$ of $p_{n-1}y_{n-1}$ and
an arbitrarily chosen balanced segment $p_ny_n$. Denote the angle of 
$H_n$ by $\gam_n$, and note that since $p_{n-1}y_{n-1}$ is balanced,
the adjacent angle between $p_ny_n$ and the subsegment $p_np_{n-1}$ 
of $p_{n-1}y_{n-1}$ equals $\pi - \gam_n$. See Figure~\ref{Fig:hn}. 
Clearly $l_n \le l_{n-1}$.
\begin{figure}
\begin{center}
\psset{unit=1mm,linewidth=0.8pt}
\begin{pspicture}(0,0)(90,31)
\psdots(12,6)(33,4)(72,6)(78,26)
\pscurve(12,6)(33,4)(54,4)(72,6)
\pscurve(72,6)(75,14)(78,26)
\pscurve(33,4)(60,19)(78,26)
\uput[90](12,6){$x_n = y_{n-1}$}
\uput[-90](23,5){$b'$}
\uput[-90](33,4){$p_n$}
\uput[-45](72,6){$p_{n-1}$}
\uput[75](78,26){$y_n = x_{n-1}$}
\psarc[linewidth=0.3pt](33,4){4}{30}{178}
\uput{5}[105](33,4){$\gam_n$}
\psarc[linewidth=0.3pt](33,4){7}{-2}{30}
\uput{8}[14](33,4){$\pi-\gam_n$}
\psarc[linewidth=0.3pt](72,6){4}{71}{185}
\uput{5}[130](72,6){$\gam_{n-1}$}
\end{pspicture}
\end{center}
\caption{Constructing $H_n$ from $H_{n-1}$}
\label{Fig:hn}
\end{figure}

Now we will construct a sequence of hinges 
$\olH_n := H_{\ol p_n}(\ol x_n,\ol y_n)$ in $\M^2_\kap$ such that
$|\ol p_n\ol x_n| = |p_nx_n|$, $|\ol p_n\ol y_n| = |p_ny_n|$,
\begin{equation} \label{eq:xy}
|\ol x_0\ol y_0| \ge |\ol x_1\ol y_1| \ge |\ol x_2\ol y_2| \ge \dots,
\end{equation}
and such that the angle $\b\gam_n$ of $\olH_n$ is greater than or equal to 
$\gam_n$. Let $\olH_0$ be a comparison hinge for $H_0$, thus
$|\ol p_0\ol x_0| = b$, $|\ol p_0\ol y_0| = a$, and $\b\gam_0 = \gam_0$. 
For $n \ge 1$, given $\olH_{n-1}$, let $\ol p_n \in \ol p_{n-1} \ol y_{n-1}$ 
be the point at distance $b'$ from $\ol y_{n-1}$, put $\ol x_n := \ol y_{n-1}$, 
and choose $\ol y_n$ such that $(\ol p_{n-1},\ol p_n,\ol y_n)$
is a comparison triple for $(p_{n-1},p_n,y_n)$. This determines $\olH_n$.
Put $\b\om_n := \ang_{\ol p_{n-1}}(\ol p_n,\ol y_n) 
= \ang_{\ol p_{n-1}}(\ol x_n,\ol y_n)$. See Figure~\ref{Fig:bhn}.
Since $\per(p_{n-1},p_n,y_n) < \tfrac45 \per(H_0)$ and 
$y_n \in \{x_0,y_0\}$, the inequalities $\gam_{n-1} \ge \b\om_n$ and
$\pi - \gam_n \ge \pi - \b\gam_n$ hold by assumption. 
Hence, $\b\gam_{n-1} \ge \gam_{n-1} \ge \b\om_n$ and so 
$|\ol x_{n-1}\ol y_{n-1}| \ge |\ol x_n\ol y_n|$ by Lemma~\ref{Lem:c-ab}.
\begin{figure}
\begin{center}
\psset{unit=1mm,linewidth=0.8pt}
\begin{pspicture}(0,0)(90,30)
\psdots(9,4)(31,4)(70,4)(77,25)(84,21)
\psline(9,4)(70,4)(84,21)
\psline(31,4)(77,25)
\psline[linestyle=dotted,linewidth=1.2pt](70,4)(77,25)
\uput[90](10,4){$\ol x_n = \ol y_{n-1}$}
\uput[-90](20,4){$b'$}
\uput[-90](31,4){$\ol p_n$}
\uput[-45](70,4){$\ol p_{n-1}$}
\uput[45](84,21){$\ol x_{n-1}$}
\uput[45](77,25){$\ol y_n$}
\psarc[linewidth=0.3pt](31,4){4}{25}{180}
\uput{5}[105](31,4){$\b\gam_n$}
\psarc[linewidth=0.3pt](31,4){7}{0}{25}
\uput{9}[12](31,4){$\pi-\b\gam_n$}
\psarc[linewidth=0.3pt](70,4){4}{53}{180}
\uput{3}[0](70,5){$\b\gam_{n-1}$}
\psarc[linewidth=0.3pt](70,4){6}{71}{180}
\uput{7}[130](70,4){$\b\om_n$}
\end{pspicture}
\end{center}
\caption{Constructing $\olH_n$ from $\olH_{n-1}$}
\label{Fig:bhn}
\end{figure}

Now we can easily conclude the proof.
For $n \to \infty$, we have
\[
|\ol p_{n-1}\ol p_n| + |\ol p_{n-1}\ol y_n| - |\ol p_n\ol y_n| 
= l_{n-1} - l_n \to 0
\]
by~\eqref{eq:l}, consequently $\b\om_n \to \pi$ and $\b\gam_n \to \pi$ 
(note that $|\ol p_{n-1}\ol p_n| \ge a - b - 2b' > 0$ and 
$|\ol p_{n-1}\ol y_n| = b' > 0$ for $n \ge 2$).
This implies in turn that
\[
l_n - |\ol x_n\ol y_n| 
= |\ol p_n \ol x_n| + |\ol p_n \ol y_n| - |\ol x_n\ol y_n| \to 0
\]
as $n \to \infty$ (recall that $l_n \le a + b < D_\kap$).
In view of~\eqref{eq:xy} and~\eqref{eq:l}, this gives 
$|\ol x_0\ol y_0| \ge |x_0y_0|$, so $H_0$ 
satisfies (H$_\kap$) and hence also (A$_\kap$).
\end{proof}

\begin{theorem} \label{Thm:top}
Let $\kap \in \R$, and let $M$ be a complete metric space of curvature 
$\ge \kap$ in the sense of Alexandrov. Suppose that 
every pair of points in $M$ at distance $< D_\kap$ is connected by a segment.
Then every hinge $H_p(x,y)$ in $M$ with $\per(p,x,y) < 2D_\kap$
satisfies {\rm (A$_\kap$)}, {\rm (H$_\kap$)}, and {\rm (D$_\kap$)}.
\end{theorem}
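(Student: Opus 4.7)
My plan is to first establish (A$_\kap$) for every hinge in $M$ with $\per<2D_\kap$, and then derive (H$_\kap$) and (D$_\kap$) from it. Since $M$ has curvature $\ge\kap$, Lemma~\ref{Lem:a-d}(i) yields that every segment in $M$ is balanced; combined with the global segment-existence assumption, the standing hypothesis of Proposition~\ref{Prop:45} is satisfied throughout $M$.

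For the global (A$_\kap$), I would argue by contradiction via infinite descent. Suppose $H_0=H_{p_0}(x_0,y_0)$ with $\per(H_0)<2D_\kap$ fails (A$_\kap$). Applying the contrapositive of Proposition~\ref{Prop:45} repeatedly produces a sequence of hinges $H_n=H_{p_n}(x_n,y_n)$ with balanced sides, all failing (A$_\kap$), with $\per(H_n)<\tfrac{4}{5}\per(H_{n-1})$, hence $\per(H_n)<(4/5)^n\per(H_0)$, and with some endpoint of $H_n$ lying on a side of $H_{n-1}$. The endpoint-nesting condition yields $|p_{n-1}p_n|\le\per(H_{n-1})+\per(H_n)\le\tfrac{9}{5}(4/5)^{n-1}\per(H_0)$, a summable bound, so $(p_n)$ is Cauchy and converges by completeness to some $p^*\in M$; since $|p_nx_n|,|p_ny_n|\le\per(H_n)\to 0$, also $x_n,y_n\to p^*$. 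The local Alexandrov hypothesis at $p^*$ supplies a neighbourhood $U_{p^*}$ in which every hinge whose three defining points lie in $U_{p^*}$ satisfies (D$_\kap$), and hence (A$_\kap$); for $n$ large enough $p_n,x_n,y_n\in U_{p^*}$, contradicting the construction.

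With global (A$_\kap$) in hand, (H$_\kap$) follows from the monotonicity of $c_{a,b}$ (Lemma~\ref{Lem:c-ab}). For (D$_\kap$), I would apply Lemma~\ref{Lem:a-d}(ii) to any hinge $H_p(x,y)$ with $\per<2D_\kap$: its sides are balanced, every relevant pair of points on its sides is joined by a segment by hypothesis, and the auxiliary hinges (one side in $px$ or $py$, opposite endpoint on the other side) satisfy (A$_\kap$) by the global result just established.

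The principal obstacle is the descent step: one must ensure that the iterative use of Proposition~\ref{Prop:45} not only shrinks perimeters geometrically but also keeps the successive vertices close enough to cluster at a single limit point, where the local curvature hypothesis finally discharges the contradiction. The endpoint-on-previous-side clause in the statement of Proposition~\ref{Prop:45} is precisely what makes this clustering automatic.
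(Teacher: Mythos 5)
Your proposal is correct and takes essentially the same route as the paper's own proof: you apply the contrapositive of Proposition~\ref{Prop:45} to generate a sequence of hinges failing (A$_\kap$) with geometrically shrinking perimeters and nested endpoints, use completeness to get a limit point where the local curvature hypothesis yields the contradiction, and then derive (H$_\kap$) and (D$_\kap$) from the global (A$_\kap$) via Lemma~\ref{Lem:c-ab} and Lemma~\ref{Lem:a-d}(ii), exactly as the paper does (the paper compresses the last step into ``it suffices to prove (A$_\kap$)''). Your explicit estimate $|p_{n-1}p_n| \le \per(H_{n-1}) + \per(H_n) \le \tfrac95 \bigl(\tfrac45\bigr)^{n-1}\per(H_0)$ just spells out what the paper asserts with ``clearly the sequence $(p_n)$ is Cauchy.''
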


\begin{proof}
Recall that by Lemma~\ref{Lem:a-d} all segments in $M$ are balanced;
furthermore, it suffices to prove that every hinge in $M$ with perimeter
less than $2D_\kap$ satisfies (A$_\kap$).  
Suppose to the contrary that there exists a hinge $H$ in $M$ 
with $\per(H) < 2D_\kap$ that does not satisfy (A$_\kap$).
Then, by Proposition~\ref{Prop:45}, there exists a hinge $H_1$ with 
$\per(H_1) < \frac45\per(H)$ and an endpoint on the union of the sides 
of $H$ such that $H_1$ does not satisfy (A$_\kap$) either. 
Inductively, for $n = 2,3,\ldots$, there exist hinges $H_n$ such that 
$\per(H_n) < \frac45\per(H_{n-1}) < \bigl(\frac45\bigr)^n\per(H)$, 
some endpoint of $H_n$ lies on the union of the sides of $H_{n-1}$, 
and $H_n$ does not satisfy~(A$_\kap$). Let $p_n$ denote the vertex of $H_n$. 
Clearly the sequence $(p_n)$ is Cauchy and thus converges 
to a point $q \in M$. However, since $M$ has curvature $\ge \kap$,
all hinges with vertex and endpoints in an appropriate neighborhood
of $q$ satisfy (A$_\kap$). This gives a contradiction, as $p_n \to q$ and
$\per(H_n) \to 0$.
\end{proof}

%%%%%%%%%%%%%%%%%%%%%%%%%%%%%%%%%%%%%%%%%%%%%%%%%%%%%%%%%%%%%%%%%%%%%%%%%%%%%%%

\section*{Appendix: Trigonometry of model spaces} 

In this appendix, we collect some trigonometric formulae for the
model spaces $\M^2_\kap$, stated in a unified way for all $\kap \in \R$
in terms of the generalized sine and cosine functions.
 
For $\kap\in\R$ we denote by $\sn_\kap\colon\R\to\R$ and 
$\cs_\kap\colon\R\to\R$ the solutions of the second
order differential equation $f'' + \kap f = 0$ satisfying the initial 
conditions 
\[
\sn_\kap(0) = 0, \quad \sn'_\kap(0) = 1, \qquad 
\cs_\kap(0) = 1, \quad \cs'_\kap(0) = 0. 
\]
Explicitly,
\begin{align*}
\sn_\kap(x)
&= \sum_{n=0}^\infty \frac{(-\kap)^n}{(2n + 1)!} x^{2n+1}
= \begin{cases}
    \sin(\sqrt{\kap}x)/\sqrt{\kap}    & \text{if $\kap > 0$,} \\
    x                                 & \text{if $\kap = 0$,} \\
    \sinh(\sqrt{-\kap}x)/\sqrt{-\kap} & \text{if $\kap < 0$,}
  \end{cases} \\
\cs_\kap(x) 
&= \sum_{n=0}^\infty \frac{(-\kap)^n}{(2n)!} x^{2n}
= \begin{cases}
    \cos(\sqrt{\kap}x)   & \text{if $\kap > 0$,} \\
    1                    & \text{if $\kap = 0$,} \\
    \cosh(\sqrt{-\kap}x) & \text{if $\kap < 0$.}
  \end{cases}
\end{align*} 
Note that 
\[
\sn'_\kap = \cs_\kap, \quad \cs'_\kap  = -\kap\sn_\kap, 
\]
and
\begin{equation}
\cs_\kap^2 + \kap\sn_\kap^2 = 1. \label{squares}
\end{equation}
The following functional equations hold. For $x,y\in\R$,
\begin{align}
\sn_\kap(x + y) &= \sn_\kap(x)\cs_\kap(y) + \cs_\kap(x)\sn_\kap(y), 
\label{snxplusy} 
\\
\cs_\kap(x + y) &= \cs_\kap(x)\cs_\kap(y) - \kap\sn_\kap(x)\sn_\kap(y); 
\label{csxplusy}
\end{align}
in particular,
\begin{align}
\sn_\kap(2x) &= 2\sn_\kap(x)\cs_\kap(x),     \label{sntwox} \\
\cs_\kap(2x) &= \cs_\kap^2(x) - \kap\sn_\kap^2(x) \label{cstwox} \\
           &= 2\cs_\kap^2(x) - 1           \nonumber \\
           &= 1 - 2\kap\sn_\kap^2(x).     \nonumber
\end{align}
Replacing $x$ by $x/2$ in the last three lines one gets
\begin{align}
\kap\sn_\kap^2 \Bigl(\frac{x}{2}\Bigr) 
&= \frac{1-\cs_\kap(x)}{2}, \label{snxhalf} \\
\cs_\kap^2 \Bigl(\frac{x}{2}\Bigr)   
&= \frac{1+\cs_\kap(x)}{2}. \label{csxhalf} 
\end{align}
Karcher~\cite{Kar} defined a ``modified distance function'' 
$\md_\kap \colon \R_+ \to \R_+$ by
\[
\md_\kap(x) 
  := \int_0^x \sn_\kap(t) \,dt 
   = \begin{cases}
       (1 - \cs_\kap (x))/\kap & \text{if $\kap \neq 0$,} \\
       x^2/2             & \text{if $\kap = 0$.}
     \end{cases}
\]
In view of~(\ref{snxhalf}), this can be written as
\[
\md_\kap(x) = 2 \sn_\kap^2\Bigl(\frac{x}{2}\Bigr).
\]
It is easy to check that
\begin{align}
\cs_\kap + \kap \md_\kap 
&= 1, \label{csplusmd} \\
\md_\kap(x + y) 
&= \md_\kap(x - y) + 2 \sn_\kap(x) \sn_\kap(y) \label{mdxplusy} \\
&= \md_\kap(x) + \cs_\kap(x) \md_\kap(y) + \sn_\kap(x) \sn_\kap(y) \nonumber \\
&= \md_\kap(x) \cs_\kap(y) + \md_\kap(y) + \sn_\kap(x) \sn_\kap(y), \nonumber \\
\md_\kap(2x) 
&= 2 \sn_\kap^2(x) \label{mdtwox} \\
&= 2(1 + \cs_\kap(x)) \md_\kap(x). \nonumber
\end{align}  

We turn to trigonometry. 
Consider a triangle in $\M^2_\kap$ with vertices $x,y,z$ and 
(possibly degenerate) sides of length $a,b,c \ge 0$, 
where $a = |yz|$, $b = |zx|$, and $c = |xy|$, and let
$\alpha,\beta,\gamma \in [0,\pi]$ denote the angles at $x,y,z$, respectively,
whenever they are defined. 
The law of cosines can be stated in a unified way~as 
\begin{align}
\md_\kap(c) 
&= \md_\kap(a+b) - \sn_\kap(a) \sn_\kap(b) (1 + \cos(\gamma)) \label{eq:cos1} \\
&= \md_\kap(a-b) + \sn_\kap(a) \sn_\kap(b) (1 - \cos(\gamma)) \nonumber \\
&= \md_\kap(a) + \cs_\kap(a) \md_\kap(b) 
    - \sn_\kap(a) \sn_\kap(b) \cos(\gamma) \nonumber \\        
&= \md_\kap(a) \cs_\kap(b) + \md_\kap(b) 
    - \sn_\kap(a) \sn_\kap(b) \cos(\gamma) \nonumber
\end{align}
(compare~(\ref{mdxplusy})), 
or, in terms of $\sn_\kap$, as
\begin{align}
\sn_\kap^2 \Bigl(\frac{c}{2}\Bigr) 
&= \sn_\kap^2 \Bigl(\frac{a+b}{2}\Bigr) - \sn_\kap(a)\sn_\kap(b) 
\cos^2 \Bigl(\frac{\gamma}{2}\Bigr) \label{eq:snc} \\
&= \sn_\kap^2 \Bigl(\frac{a-b}{2}\Bigr) + \sn_\kap(a)\sn_\kap(b) 
\sin^2 \Bigl(\frac{\gamma}{2}\Bigr). \nonumber
\end{align}
Multiplying any of these equations by $\kap$ one obtains the more familiar
formula 
\begin{equation}
\cs_\kap(c) = \cs_\kap(a)\cs_\kap(b) + \kap\sn_\kap(a)\sn_\kap(b) \cos(\gamma) 
\end{equation}
for the hyperbolic and spherical geometries. 
The ``dual law of cosines'' or ``law of cosines for angles'' is the identity
\begin{equation}
\cos(\gam) = \sin(\alpha)\sin(\beta)\cs_\kap(c) - \cos(\alpha)\cos(\beta); 
\end{equation}
in the Euclidean case it represents the fact
that $\alpha + \beta + \gamma = \pi$. 
The law of sines is given by
\begin{equation}
\sn_\kap(a) \sin(\beta) = \sn_\kap(b) \sin(\alpha).
\end{equation}
Let $l$ denote the distance from the midpoint of the side $xy$
of the triangle to the vertex $z$. Then
\begin{equation}
2 \cs_\kap \Bigl(\frac{c}{2}\Bigr) \md_\kap l 
= \md_\kap(a) + \md_\kap(b) - 2\md_\kap\Bigl(\frac{c}{2}\Bigr);
\label{mdl}
\end{equation}
equivalently,
\begin{equation}
2\cs_\kap \Bigl(\frac{c}{2}\Bigr) \sn_\kap^2 \Bigl(\frac{l}{2}\Bigr) 
= \sn_\kap^2 \Bigl(\frac{a}{2}\Bigr) +
\sn_\kap^2 \Bigl(\frac{b}{2}\Bigr) - 2\sn_\kap^2\Bigl(\frac{c}{4}\Bigr).
\end{equation}
(This equation may be used to define spaces of curvature $\ge \kap$ or
$\le \kap$.) Multiplying by $\kap$ one obtains the simple formula
\begin{equation}
2\cs_\kap \Bigl(\frac{c}{2}\Bigr) \cs_\kap(l) = \cs_\kap(a) + \cs_\kap(b)
\end{equation}
for the hyperbolic and spherical geometries.

\begin{proof}[Proof of~\eqref{mdl}]
(We omit all subscripts $\kap$.) By~\eqref{eq:cos1},
\begin{align*}
\md(l) &= \md(b) \cs\Bigl(\frac{c}{2}\Bigr) + \md\Bigl(\frac{c}{2}\Bigr) 
- \sn(b) \sn\Bigl(\frac{c}{2}\Bigr) \cos(\alpha), \\
\md(a) &= \md(b) \cs(c) + \md(c) - \sn(b) \sn(c) \cos(\alpha).
\end{align*}
Using~(\ref{csxhalf}) and~(\ref{sntwox}) we get
\[
2 \cs\Bigl(\frac{c}{2}\Bigr) \md(l) - \md(a) 
= \md(b) + 2 \cs\Bigl(\frac{c}{2}\Bigr) \md\Bigl(\frac{c}{2}\Bigr) - \md(c).
\]
Now the formula follows from~(\ref{mdtwox}).         
\end{proof}

%%%%%%%%%%%%%%%%%%%%%%%%%%%%%%%%%%%%%%%%%%%%%%%%%%%%%%%%%%%%%%%%%%%%%%%%%%%%%%%

%%%%%%%%%%%%%%%%%%%%%%%%%%%%%%%%%%%%%%%%%%%%%%%%%%%%%%%%%%%%%%%%%%%%%%%%%%%%%%%


\begin{thebibliography}{99}

\bibitem{AleKP}
{\sc Alexander, S., V.~Kapovitch, {\rm and} A.~Petrunin.}
{\it Alexandrov Geometry.}
In preparation.

\bibitem{Alex0}
{\sc Alexandrov, A.~D.}
{A theorem on triangles in a metric space and some of its applications.}
(Russian) {\it Trudy Mat. Inst. Steklov 38\/} (1951), 5--23.

\bibitem{Alex1} 
\bysame\
{\it Die innere Geometrie der konvexen Fl\"achen.} 
Aka\-de\-mie-Verlag 1955.

\bibitem{BriH} 
{\sc Bridson, M.~R. {\rm and} A.~Haefliger.} 
{\it Metric Spaces of Non-Positive Curvature.}
Springer 1999.

\bibitem{BurBI} 
{\sc Burago, D., Y.~Burago, {\rm and} S.~Ivanov.} 
{\it A Course in Metric Geometry.}
Amer. Math. Soc. 2001.

\bibitem{BurGP} 
{\sc Burago, Y.~D., M.~Gromov, {\rm and} G.~Perelman.} 
{A.~D.~Alexandrov spaces with curvature bounded below.}
{\it Russian Math. Surveys 47\/} (1992), 1--58.

\bibitem{Kar} 
{\sc Karcher, H.}
{Riemannian comparison constructions.}
In {\it Global differential geometry\/} (S. S. Chern, ed.), 
Math. Assoc. America 1989, pp. 170--222.

\bibitem{Pla1}
{\sc Plaut, C.}
{Almost Riemannian spaces.}
{\it J. Differential Geom. 34\/} (1991), 515--537.

\bibitem{Pla2}
\bysame\
{Spaces of Wald--Berestovskii curvature bounded below.}
{\it J. Geom. Anal. 6\/} (1996), 113--134. 

\bibitem{Top1}
{\sc Toponogov, V.~A.} 
{On convexity of Riemannian spaces of positive curvature.} 
(Russian) {\it Dokl. Akad. Nauk SSSR 115\/} (1957), 674--676.

\bibitem{Top2}
\bysame\
{Riemannian spaces having their curvature bounded below by a positive number.} 
(Russian) {\it Dokl. Akad. Nauk SSSR 120\/} (1958), 719--721.

\bibitem{Top3}
\bysame\
{Riemann spaces with curvature bounded below.} 
(Russian) {\it Uspehi Mat. Nauk 14\/} (1959), 87--130.
\end{thebibliography}
\end{document}